\documentclass{amsart}
\usepackage{amsfonts,amssymb,amsbsy,amsmath,amsthm}
\usepackage{epsfig,float,bm}
\usepackage{etex,pictex,graphicx,hyperref,relsize,esvect,cancel,ifthen}
\usepackage[usenames]{xcolor}
\usepackage{hyperref}
\usepackage{fontawesome5}
\usepackage{caption}
\captionsetup{justification=centering} 
\newtheorem{theorem}{Theorem}

\newtheorem{corollary}[theorem]{Corollary}
\newtheorem{proposition}[theorem]{Proposition}

\theoremstyle{definition}

\newtheorem{remark}[theorem]{Remark}
\newcommand{\bfi}{\mathbf{i}}
\newcommand{\bfj}{\mathbf{j}}
\DeclareMathOperator{\acl}{acl}

\parindent0pt
\newcommand{\cnl}{\operatorname{cl}}
\newcommand{\snl}{\operatorname{sl}}
\newcommand{\slh}{\operatorname{slh}}
\hypersetup{
    colorlinks=true,
    citecolor=blue,
    linkcolor=blue,
    filecolor=magenta,      
    urlcolor=blue,
    }

\begin{document}

\title[A Remarkable Relation Between the Squircle and Lemniscate]{An Elementary Proof of a Remarkable Relation Between the Squircle and Lemniscate}
\author{Zbigniew Fiedorowicz, Muthu Veerappan Ramalingam}
\address{\hbox{\parbox{150pt}{Department of Mathematics,\\
The Ohio State University\\
Columbus, Ohio 43235 -1174, USA}\hspace{80pt}
\parbox{\linewidth}{Aranthangi, Tamil Nadu, India}}
}

\begin{abstract}
It is well known that there is a somewhat mysterious relation between the area of the quartic Fermat curve $x^4+y^4=1$, aka squircle, and the arc length of the lemniscate
$(x^2+y^2)^2=x^2-y^2$. The standard
proof of this fact uses relations between elliptic integrals and the gamma function. In this article we generalize this result to relate areas of sectors
of the squircle to lengths of arcs of the lemniscate. We provide a geometric interpretation of this relation and an elementary proof
of the relation, which only uses basic integral calculus. We also discuss an analytic interpretation relating analogs of trigonometric
functions associated with the lemniscate and the squircle, as well as
an alternate version of this geometric relation, which is implicit in a calculation of Siegel.
\end{abstract}

\subjclass{14H50, 33E05, 26A06}
\keywords{squircle, lemniscate, elliptic integrals, elliptic functions, squigonometry}
\maketitle

\section{Introduction}

This article proves a geometric result which relates two different analogs of the trigonometric functions. Recall that there are two common
rigorous constructions of the trigonometric functions using integrals as the starting point. One is based on arc
length while the other is based on area.

The first construction relates to uniform circular motion. A particle moving at unit speed counterclockwise
around the unit circle $x^2+y^2=1$ starting at the point $(1,0)$ will be at position $(x,y)=(\cos(t),\sin(t))$ at time $t$.
It follows that the arc length of the circular sector from $(1,0)$ to a pont $(x,y)$ in the first quadrant will equal the
length of time the particle takes to travel that arc and hence we have
\begin{equation}
t=\arccos(x)=\int_x^1\frac{1}{\sqrt{1-u^2}}\,du.
\end{equation}
Note that this amounts to the definition of radian measure of the subtended angle.
We can use this integral formula to rigorously define the function ${\arccos:[0,1]\longrightarrow\left[0,\frac{\pi}{2}\right]}$. 
We can then define the $\cos$ function on the interval $\left[0,\frac{\pi}{2}\right]$ as the inverse of this function. Then
we can extend the domain of cosine to all real numbers by using symmetries of the circle and periodicity.

An alternative approach to a rigorous construction of the trigonometric functions is used in the calculus textbooks
of Spivak \cite[Chap. 15]{spivak} and Apostol \cite[p. 102]{apostol}. This approach is based on the observation that the radial area swept out by the above-mentioned
particle moving along the unit circle is half of the arc length swept out by the particle. Thus we obtain
\begin{equation}
\frac{1}{2}t=\frac{1}{2}\arccos(x)=\int_x^1\sqrt{1-u^2}\,du+\frac{1}{2}x\sqrt{1-x^2}.
\end{equation}
We then use this formula to define $\arccos:[0,1]\longrightarrow\left[0,\frac{\pi}{2}\right]$.  Taking inverses and
using symmetry and periodicity, we obtain a rigorous construction of the cosine function. An advantage of this approach
is that it avoids the use of improper integrals.

These two constructions serve as templates for defining analogs of trigonometric functions using other planar curves
instead of the circle. For certain special curves this defines functions with interesting mathematical properties.

In the late 18th century Euler and Gauss observed that the first approach to the trigonometric functions had a remarkable
analog obtained by replacing the unit circle by the lemniscate $(x^2+y^2)^2=x^2-y^2$ (\cite{Wikipedia:lemniscate_of_bernoulli}), 
whose graph is a twisted circle.
Considering a particle moving at unit speed around the lemniscate, they defined an analog of $\arccos$ by the following
integral formula for arc length:
\begin{equation}
t=\acl(x)=\int_x^1\frac{1}{\sqrt{1-u^4}}\,du.
\end{equation}
Taking the inverse of this function and using symmetries of the lemniscate and periodicity, they defined lemniscate
analogs of the trigonometric functions. They then observed that these functions satisfy various nice identities analogous to 
trigonometric identities. Further generalizations by Legendre, Jacobi, Abel and Weierstrass led to the theory of elliptic integrals,
 functions and curves, which have important applications to algebraic geometry, number theory, mathematical physics and cryptography.

A different analog of the trigonometric functions, based upon the second approach, stems from the observaton that the
notion of vector length in the plane can be generalized to the following formula
\[||x\bfi+y\bfj||_p=\sqrt[p]{|x|^p+|y|^p},\]
where $p$ is a real number $\ge 1$.  These alternative notions of length are known as $p$-norms. The case $p=2$ is the usual Euclidean notion of length in the plane. These different notions of
length give different notions of geometry in the plane. In particular in the $p$-norm geometry, the unit circle has the
equation $|x|^p+|y|^p=1$, also known as a Lam\'e curve.  We can then use the second approach to define corresponding generalizations of the trigonometric
functions. Consider a particle moving counterclockwise around $|x|^p+|y|^p=1$ starting at $(1,0)$ in such a way that the radial area 
is swept out at a constant rate of $\frac{1}{2}$. [Such a motion is called \textit{keplerian} after Kepler's second law of planetary
motion, c.f. \cite{gambini_et_al}.]  We define the functions $\cos_p$ and $\sin_p$ as giving the position 
$(x,y)=(\cos_p(t),\sin_p(t))$ at time $t$.  Then if $(x,y)$ is in the first quadrant, we have
\[\frac{1}{2}t=\frac{1}{2}\arccos_p(x)=\int_x^1\sqrt[p]{1-u^p}\,du+\frac{1}{2}x\sqrt[p]{1-x^p}.\]
This approach has been studied by various authors, e.g \cite{gambini_et_al}, \cite{grammel}, \cite{alevin}, \cite{squigonometry_book},
\cite{poodiack}, \cite{shelupsky}, \cite{wood}.
In the special case $p=4$ the graph of $x^4+y^4=1$ is a flatened circle,
popularly known as a \textit{squircle} (\cite{Wikipedia:squircle}), and we have
\begin{equation}
\frac{1}{2}t=\int_x^1\sqrt[4]{1-u^4}\,du+\frac{1}{2}x\sqrt[4]{1-x^4}.
\end{equation}

Calculations of Legendre and Dirichlet in the early 19th century established the following relation between special cases of the
integrals (3) and (4):
\begin{equation}\label{lemnisquircle_eqn}
\int_0^1\frac{1}{\sqrt{1-u^4}}\,du = \sqrt{2}\int_0^1\sqrt[4]{1-u^4}\,du,
\end{equation}
which is an analog of the following trigonometric relation
\[\int_0^1\frac{1}{\sqrt{1-u^2}}\,du = 2\int_0^1\sqrt{1-u^2}\,du.\]
If we multiply these relations by 4 we obtain the geometric interpretations that the perimeter of the lemniscate is $\sqrt{2}$
times the area enclosed by the squircle, whose trigonometric analog is that the circumference of the unit circle is twice the area
enclosed by the circle. This suggests that there might be a more general relation between lengths of arcs of the lemniscate
and areas of radial sectors of the squircle and consequently between the corresponding trigonometric analogs. In this article we demonstrate such a relation.

This relation is vividly illustrated in the following YouTube \href{https://www.youtube.com/watch?v=mAzIE5OkqWE&t=3s}{video} created by the second author (c.f. \cite{mvr}), corresponding to Figure 1 below.\\ 
\begin{figure}[H]
\centerline{
\includegraphics[scale=0.1]{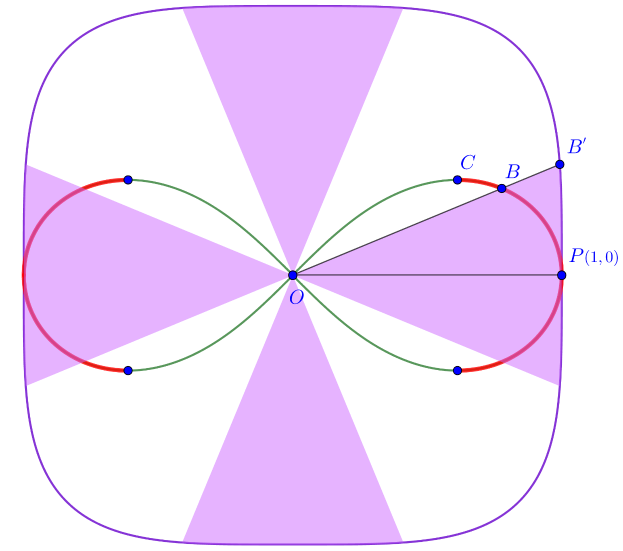}
}
\caption{Squircle and Lemniscate}
\end{figure}
The relation depicted here
is $L=A\sqrt{2}$, where $A$ is the shaded area in the picture and $L$ is the arc length of the bolded portion of the lemniscate. The points $B$ and $C$ in the picture are related by the equation $\overline{OC}=\overline{OB}^2$, where $O$ is the origin.
Due to the symmetries of the squircle and the lemniscate it suffices to check the relation for the portions of the areas and lengths lying in the first
octant: ${\{(x,y)\ |\ 0\le y\le x\}}$. If we denote the corresponding area by $a$ and the corresponding arc length by $l$, we have $A=8a$ and
$L=4l$. Thus the given relation reduces to
\[4l=8a\sqrt{2}\quad\Longleftrightarrow\quad l=2a\sqrt{2}\quad\Longleftrightarrow\quad l-2a\sqrt{2}=0.\]
Hence we are reduced to proving Theorem \ref{main_result} below.

\bigskip

\textit{The authors would like to take this opportunity to thank the editor and referees for their thoughtful review of our submission
and for their helpful suggestions.}

\section{Main Result}

\begin{theorem}\label{main_result} Let $B$ be a point in the first quadrant of the lemniscate $\left(x^2+y^2\right)^2=x^2-y^2$ and let $B'$ 
be its radial projection onto the squircle $x^4+y^4=1$. Let $C$ be the point on the first quadrant of the lemniscate such that 
$\overline{OC}=\overline{OB}^2$. Then
\[l-2a\sqrt{2}=0,\]
where $l$ is the arc length of the lemniscate from $C$ to $P$ and $a$ denotes the area of the squircular sector $OPB'$.
\end{theorem}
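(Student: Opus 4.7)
\section*{Proof plan}

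The plan is to transfer everything to polar coordinates, reduce the identity $l = 2a\sqrt{2}$ to a statement equating two integrals whose upper limits are linked by the condition $\overline{OC} = \overline{OB}^2$, and then prove that equality by differentiating both sides with respect to the polar angle of $B$.

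\textbf{Setup.} Write the lemniscate in polar form as $r^2 = \cos(2\theta)$ (right lobe) and the squircle as $\rho(\theta)^{-4} = \cos^4\theta + \sin^4\theta$. Let $P=(1,0)$, and let $\theta_B$ denote the polar angle of $B$, which equals the polar angle of $B'$ since $B'$ is the radial projection of $B$. Let $\theta_C$ be the polar angle of $C$. The standard polar arc length formula applied to $r(\theta) = \sqrt{\cos(2\theta)}$ gives, after the cancellation $r^2 + (dr/d\theta)^2 = 1/\cos(2\theta)$,
\[
l = \int_0^{\theta_C} \frac{d\theta}{\sqrt{\cos(2\theta)}},
\qquad
a = \frac{1}{2}\int_0^{\theta_B} \frac{d\theta}{\sqrt{\cos^4\theta + \sin^4\theta}}.
\]
Using the identity $\cos^4\theta + \sin^4\theta = \tfrac{1}{2}\bigl(1 + \cos^2(2\theta)\bigr)$, the area integral simplifies so that
\[
2a\sqrt{2} = 2\int_0^{\theta_B} \frac{d\theta}{\sqrt{1+\cos^2(2\theta)}}.
\]

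\textbf{Reformulation of the constraint.} The condition $\overline{OC} = \overline{OB}^2$ translates, via $\overline{OB}^2 = \cos(2\theta_B)$ and $\overline{OC}^2 = \cos(2\theta_C)$, into the clean relation
\[
\cos(2\theta_C) = \cos^2(2\theta_B).
\]
Substituting $\phi = 2\theta$ in both integrals and writing $u = 2\theta_C$, $v = 2\theta_B$, the theorem reduces to proving
\[
F(v) := \frac{1}{2}\int_0^{u(v)} \frac{d\phi}{\sqrt{\cos\phi}} - \int_0^{v} \frac{d\phi}{\sqrt{1+\cos^2\phi}} = 0,
\qquad \cos u(v) = \cos^2 v.
\]

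\textbf{Differentiation argument.} At $v=0$ we have $u=0$ and $F(0)=0$, so it suffices to show $F'(v) \equiv 0$. Implicit differentiation of $\cos u = \cos^2 v$ gives $u'(v) = \sin(2v)/\sin u$, and the key algebraic step is the factorization
\[
\sin u = \sqrt{1-\cos^4 v} = \sin v\,\sqrt{1+\cos^2 v},
\]
which yields $u'(v) = 2\cos v/\sqrt{1+\cos^2 v}$. Substituting into the derivative of the first integral and using $\sqrt{\cos u} = \cos v$ (valid in the first octant where $\cos v > 0$), both terms of $F'(v)$ collapse to $1/\sqrt{1+\cos^2 v}$, so $F'(v) = 0$.

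\textbf{Main obstacle.} The computation is almost entirely routine; the one place that could go wrong is the sign/branch bookkeeping. The identity $\sin u = \sin v\sqrt{1+\cos^2 v}$ and $\sqrt{\cos u} = \cos v$ both require $v \in [0,\pi/4]$, which is exactly the first-octant restriction already arranged in the reduction before the theorem. Once that range is fixed, every square root is unambiguous and the proof proceeds with only elementary calculus, as promised in the introduction.
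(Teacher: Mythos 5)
Your proposal is correct and takes essentially the same route as the paper's own proof: the same polar equations for the lemniscate and squircle, the same reduction of the constraint to $\cos(2\theta_C)=\cos^2(2\theta_B)$, and the same differentiation/implicit-differentiation argument showing the difference of the two integrals is constant and vanishes at the initial position (your substitution $\phi=2\theta$ is only cosmetic). One trivial slip in your final paragraph: covering the full first octant $\theta_B\in[0,\pi/4]$ means $v=2\theta_B$ ranges over $[0,\pi/2]$, not $[0,\pi/4]$, and the sign/branch identities you invoke do hold on all of $[0,\pi/2]$.
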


\begin{proof}
We will use polar coordinates to compute these areas and arc lengths. The relevant formulas are
\begin{eqnarray*}
\mbox{area} &= &\frac{1}{2}\int_{\theta_1}^{\theta_2}r^2\,d\theta\\
\mbox{arc length} &= &\int_{\theta_1}^{\theta_2}\sqrt{r^2+\left(\frac{dr}{d\theta}\right)^2}\,d\theta
\end{eqnarray*}
The integrals for the arc length of the lemniscate and the area of the squircle do not have elementary
antiderivatives. Hence our strategy will be to express $l-2a\sqrt{2}$ in terms of these integrals, differentiate this expression,
and then, by applying the fundamental theorem of calculus, show that this derivative is 0. This implies $l-2a\sqrt{2}$
is a constant, and evaluating this constant at the initial position $B=C=P$ we obtain that $l-2a\sqrt{2}=0$.

The polar equation of the lemniscate in the first quadrant is
\[r=\sqrt{\cos(2\theta)},\ 0\le\theta\le\frac{\pi}{4}.\]
To derive the polar equation of the  squircle, we start with the obvious Cartesian parametric equations over the interval
$\left[0,\frac{\pi}{2}\right]$:
\begin{eqnarray*}
x &= &\sqrt{\cos(s)}\\
y &= &\sqrt{\sin(s)}
\end{eqnarray*}
and convert to polar coordinates using the relations:
\[\tan(\theta)=\frac{y}{x},\qquad r^2=x^2+y^2.\]
We obtain
\[\tan(\theta)=\frac{\sqrt{\sin(s)}}{\sqrt{\cos(s)}}=\sqrt{\tan(s)}\quad\Longrightarrow\quad\tan(s)=\tan^2(\theta).\]
To relate $\cos(s)$ and $\sin(s)$ to $\theta$, we use right triangle relations.\\
\centerline{
\beginpicture
\setcoordinatesystem units <2cm,2cm>
\setplotarea x from 0 to 3, y from -0.2 to 1
\setlinear
\plot 0 0 2 0 /
\plot 2 0 2 1 /
\plot 0 0 2 1 /
\put{$s$} at 0.4 0.1
\put{1} at 1 -0.1
\put{$\tan^2(\theta)$} at 2.35 0.4
\put{$\sqrt{1+\tan^4(\theta)}$} at 0.4 0.6
\endpicture
}\\
We obtain
\begin{eqnarray*}
x &= &\sqrt{\cos(s)}=\frac{1}{\sqrt[4]{1+\tan^4(\theta)}}\\
y &= &\sqrt{\sin(s)}=\frac{\tan(\theta)}{\sqrt[4]{1+\tan^4(\theta)}}.
\end{eqnarray*}
Hence a polar equation of the  squircle is
\begin{equation}\label{polar_tan}
r^2=x^2+y^2=\frac{1+\tan^2(\theta)}{\sqrt{1+\tan^4(\theta)}}=\frac{\sec^2(\theta)}{\sqrt{1+\tan^4(\theta)}}.
\end{equation}
In order to compare the squircle with the lemniscate, whose polar equation involves $\cos(2\theta)$, it is sensible
to rewrite this equation in terms of $\cos(\theta)$ and $\sin(\theta)$ and then simplify it using the
half-angle relations:
\begin{eqnarray}
r^2 &= &\frac{\frac{1}{\cos^2(\theta)}}{\sqrt{1+\frac{\sin^4(\theta)}{\cos^4(\theta)}}}\\
&= &\frac{1}{\sqrt{\cos^4(\theta)+\sin^4(\theta)}}\\
&= &\frac{1}{\sqrt{\left(\frac{1+\cos(2\theta)}{2}\right)^2+ \left(\frac{1-\cos(2\theta)}{2}\right)^2}}\\
&= &\frac{\sqrt{2}}{\sqrt{1+\cos^2(2\theta)}}\label{polar_cos}
\end{eqnarray}
Let us call the $\theta$ coordinate of the point $B$, $\theta=\alpha$, and the $\theta$ coordinate of the point $C$,
$\theta=\beta$. Since both points lie on the lemniscate, their $r$ coordinates satisfy $r=\sqrt{\cos(2\theta)}$. Hence the polar
coordinates of $B$ and $C$ are $(r,\theta)=(\sqrt{\cos(2\alpha)},\alpha)$ and $(r,\theta)=(\sqrt{\cos(2\beta)},\beta)$ respectively.
We are given that the $r$ coordinate of $C$ is the square of the $r$ coordinate of $B$.  Hence
\begin{equation}\label{cos_relation}
\sqrt{\cos(2\beta)}=\left(\sqrt{\cos(2\alpha)}\right)^2=\cos(2\alpha)\quad\Longrightarrow\quad\cos(2\beta)=\cos^2(2\alpha).
\end{equation}
On the lemniscate we have
\[\frac{dr}{d\theta}=-\frac{\sin(2\theta)}{\sqrt{\cos(2\theta)}},\]
so the length of the arc from $P$ to $C$ is
\begin{eqnarray*}
l &= &\int_{0}^{\beta}\sqrt{r^2+\left(\frac{dr}{d\theta}\right)^2}\,d\theta\\
&= &\int_0^\beta\sqrt{\cos(2\theta)+\frac{\sin^2(2\theta)}{\cos(2\theta)}}\,d\theta\\
&= &\int_0^\beta\frac{1}{\sqrt{\cos(2\theta)}}\,d\theta.
\end{eqnarray*}
For the area of the squircular sector we have
\begin{eqnarray*}
a &= &\frac{1}{2}\int_0^\alpha r^2\,d\theta\\
&= &\frac{1}{\sqrt{2}}\int_0^\alpha \frac{1}{\sqrt{1+\cos^2(2\theta)}}\,d\theta
\end{eqnarray*}
Hence
\[l-2a\sqrt{2}=\int_0^\beta\frac{1}{\sqrt{\cos(2\theta)}}\,d\theta-\int_0^\alpha \frac{2}{\sqrt{1+\cos^2(2\theta)}}\,d\theta.\]
Differentiating with respect to $\alpha$ and applying the fundamental theorem of calculus, the chain rule and equation (\ref{cos_relation}), we obtain
\begin{eqnarray}
\frac{d}{d\alpha}\left(l-2a\sqrt{2}\right)&=&\frac{1}{\sqrt{\cos(2\beta)}}\left(\frac{d\beta}{d\alpha}\right)-\frac{2}{\sqrt{1+\cos^2(2\alpha)}}
\\
&=&\frac{1}{\cos(2\alpha)}\left(\frac{d\beta}{d\alpha}\right)-\frac{2}{\sqrt{1+\cos^2(2\alpha)}}\label{diff_relation}
\end{eqnarray}
Implicitly differentiating with respect to $\alpha$ the relation (\ref{cos_relation})
\[\cos(2\beta)=\cos^2(2\alpha),\]
we obtain
\[-2\sin(2\beta)\left(\frac{d\beta}{d\alpha}\right) = -4\cos(2\alpha)\sin(2\alpha),\]
from which we deduce
\begin{eqnarray*}
\frac{d\beta}{d\alpha} &= &\frac{2\cos(2\alpha)\sin(2\alpha)}{\sin(2\beta)}\\
&= &\frac{2\cos(2\alpha)\sqrt{1-\cos^2(2\alpha)}}{\sqrt{1-\cos^2(2\beta)}}\\
&= &\frac{2\cos(2\alpha)\sqrt{1-\cos^2(2\alpha)}}{\sqrt{1-\cos^4(2\alpha)}}\\
&= &\frac{2\cos(2\alpha)}{\sqrt{1+\cos^2(2\alpha)}}.
\end{eqnarray*}
Substituting this back into equation (\ref{diff_relation}) we obtain
\[\frac{d}{d\alpha}\left(l-2a\sqrt{2}\right)=\frac{1}{\cos(2\alpha)}\left(\frac{2\cos(2\alpha)}{\sqrt{1+\cos^2(2\alpha)}}\right)
-\frac{2}{\sqrt{1+\cos^2(2\alpha)}}=0.\]
Hence
\[l-2a\sqrt{2}=c,\]
for some constant $c$. If we take $\alpha=0$, then $B=C=P$ and $\beta=0$, so
\[c=\left.l-2a\sqrt{2}\right|_{\alpha=0}=\int_0^0\frac{1}{\sqrt{\cos(2\theta)}}\,d\theta-\int_0^0\frac{2}{\sqrt{1+\cos^2(2\theta)}}\,d\theta=0.\]
Hence $l-2a\sqrt{2}=0$ for all $\alpha\in\left[0,\frac{\pi}{4}\right]$.
\end{proof}

\begin{corollary}\label{squircle_area}The area of the squircle is $\varpi\sqrt{2}\approx 3.70814935$, where 
$\varpi\approx2.62205755$ denotes the arc length of one lobe of the lemniscate.
\end{corollary}

\begin{proof}
When $\theta=\frac{\pi}{4}$, the shaded region in Figure 1 encompasses the entire interior of the squircle.  We then have $B=C=O$ and $l=\frac{\varpi}{2}$.
Hence
\[a=\frac{l}{2\sqrt{2}}=\frac{\varpi}{4\sqrt{2}}.\]
Thus the area of the squircle is
\[A=8a=8\left(\frac{\varpi}{4\sqrt{2}}\right)=\varpi\sqrt{2}.\]
\end{proof}

\begin{remark}
The constant $\varpi$ is commonly known as the \textit{lemniscate constant} and has connections to many interesting areas of mathematics, c.f. \cite{Wikipedia:lemniscate_constant},\cite{Wikipedia:squircle}, \cite{mparker}.
\end{remark}

\begin{remark}The relation $\overline{OC}=\overline{OB}^2$ in Theorem \ref{main_result} was discovered by the second author 
through numerical computations. The previously known result relating the perimeter of the lemniscate to the area of the squircle of
Corollary \ref{squircle_area} suggests that this should generalize to a result relating the shaded area in Figure 1 corresponding to a
given point $B$, scaled by a factor of $\sqrt{2}$, to the length of the bolded arcs for {\em some} point $C$. To discover the relation
between $B$ and $C$,  one computes the quantities $l$ and $2a\sqrt{2}$  in Theorem \ref{main_result} for various values of the
angles $\alpha$ and $\beta$ and observes that they are equal precisely when $\overline{OC}=\overline{OB}^2$.
\end{remark}

\begin{remark}\label{lemni_squircle_physics}
The proof of Theorem \ref{main_result} given above has the following physics interpretation. If point $B'$ moves counterclockwise
around the squircle so that the radial area is swept out at a constant rate of $\frac{1}{2}$, then the corresponding point $C$ moves
 around the lemniscate at a constant speed of $\sqrt{2}$.  In other words, keplerian motion on the squircle corresponds to uniform
motion along the lemniscate. This is nicely illustrated in the afore-mentioned
\href{https://www.youtube.com/watch?v=mAzIE5OkqWE&t=3s}{video}. It follows that the correspondence $B'\mapsto C$ extends
to a map from the entire squircle to the lemniscate. This map is a double branched cover of the squircle to the lemniscate -- as $B'$
goes once around the squircle, the point $C$ moves twice around the lemniscate.
\end{remark}

\begin{remark}\label{keplerian_conjecture}
Remark \ref{lemni_squircle_physics} suggests the following question. For which values of $p$  does there exist another closed curve such that keplerian motion around the $p$-norm circle $\{(x,y)\ |\ |x|^p+|y|^p=1\}$ corresponds to uniform motion around this
hypothetical curve, via some relation between the radial polar coordinates of these curves?
The answer is positive for $p=2$ where a point in keplerian motion around the circle is also in uniform motion. 
Remark \ref{lemni_squircle_physics} shows the answer is positive for $p=4$ where point $B'$ in keplerian motion around the squircle
corresponds to point $C$ in uniform motion around the lemniscate. More generally one can ask if there are any other interesting
examples of pairs of curves where keplerian motion along the first corresponds to uniform motion along the second.
Some evidence in this direction is the following generalization \cite{gemini} of equation (\ref{lemnisquircle_eqn}) 
\begin{equation}
\int_0^1 \frac{dr}{\sqrt{1-r^p}}=2^{\frac{2}{p}}\int_0^1\sqrt[p]{1-x^p}\,dx.
\end{equation}
This equation relates the area of the $p$-norm circle to the arc length of a sinusoidal spiral.
\end{remark}

\begin{remark}As we noted in the introduction, calculations which establish the relation between the perimeter of the lemniscate and
the area of the squircle date back to Legendre and Dirichlet. Going back to the definition of lemniscate by J. Bernoulli, it was known that
its perimeter is given by $4\int_0^1\frac{1}{\sqrt{1-r^4}}\,dr$ (c.f. Remark \ref{parametric_lemniscate} below). According to
 \cite[p. 524]{whittaker-watson}, Legendre \cite{legendre} computed that
\begin{equation}\label{historic_eqn}
4\int_0^1\frac{1}{\sqrt{1-r^4}}\,dr = \frac{1}{\sqrt{2\pi}}\left[\Gamma\left(\frac{1}{4}\right)\right]^2,
\end{equation}
where $\Gamma(z)=\int_0^\infty t^{z-1}e^{-t}dt$ denotes the gamma function, an extension of the factorial function. Similarly the
area enclosed by the squircle is given by $4\int_0^1\sqrt[4]{1-x^4}\,dx$. According to \cite[p. 258-259]{whittaker-watson}, an integration
method due to Dirichlet \cite{dirichlet} can be applied to evaluate this integral in terms of the gamma function. When scaled by a
factor of $\sqrt{2}$,  this gives the same result as (\ref{historic_eqn}).  In \cite{grammel} Grammel defined the $p$-norm analogs
of the trigonometric functions and explicitly computed the areas of the $p$-norm circles, including the squircle as a special case.
A similar computation was made in \cite{shelupsky}.
Apparently the relation between the perimeter of the lemniscate and the area of the squircle was first observed by Levin \cite{alevin}.
An explicit calculation relating these two quantities may be found in \cite[p. 173]{squigonometry_book}.
\end{remark}

\section{Analytic Interpretations}

The results in the previous section were presented geometrically as relations between arc lengths and areas.  In this section we
reinterpret these results analytically as relations between the analogs of the trigonometric functions associated to the lemniscate
and those associated to the squircle.  We begin by rewriting the integral for the arc length of the lemniscate

\begin{remark}\label{parametric_lemniscate}
By rewriting the polar equation of the lemniscate in Cartesian coordinates and applying the half-angle formulas, we obtain the following
parametric equations for the first quadrant portion of the lemniscate:
\begin{equation}\label{radial_parametrization}
(X,Y)=\left(r\sqrt{\frac{1+r^2}{2}},r\sqrt{\frac{1-r^2}{2}}\right),\quad 0\le r\le 1,
\end{equation}
where $r$ is the radial polar coordinate. Thus after some algebraic simplifications, we obtain the following formula for the length of
the arc of the lemniscate between $P=(1,0)$ at $r=1$ and a first quadrant point of the lemniscate with $r=R$:
\begin{equation}\label{radial_alength}
l=\int_R^1\sqrt{\left(\frac{dX}{dr}\right)^2+\left(\frac{dY}{dr}\right)^2}\,dr=\int_R^1\frac{1}{\sqrt{1-r^4}}\,dr.
\end{equation}
\end{remark}

Before we present our results, we need to give a brief overview of the lemniscate cosine. As we indicated in the introduction, we first consider a particle moving at unit speed counterclockwise around the
lemniscate in the first quadrant starting at $P=(1,0)$. Then the lemniscate cosine function $\cnl(t)$ is the distance $r$ of the particle
from the origin at time $t$. In order to make this definition rigorous, we first define the lemniscate arccosine function 
$\acl:[0,1]\longrightarrow\left[0,\frac{\varpi}{2}\right]$ by
\[t=\acl(R)=\int_R^1\frac{1}{\sqrt{1-r^4}}\,dr,\]
as in equation (\ref{radial_alength}). We then define $\cnl$ on the interval $\left[0,\frac{\varpi}{2}\right]$ to be the inverse of this
function and use symmetries of the lemniscate and periodicity to extend the domain of $\cnl$ to all real numbers. Note that by
construction the period of $\cnl$ is $2\varpi$, the perimeter of the lemniscate.

With this notation, Theorem \ref{main_result} can be reinterpreted as the following result relating the squircle and lemniscate analogs
of the trigonometric functions.

\begin{theorem} For all real numbers $t$:
\begin{equation}\label{squig_relation}
\cnl(\sqrt{2}t)=\frac{\cos_4^2(t)-\sin_4^2(t)}{\cos_4^2(t)+\sin_4^2(t)}.
\end{equation}
\end{theorem}

\begin{proof}
 Consider the points $B'$ and $C$ as moving on the squircle and lemniscate as in Remark \ref{lemni_squircle_physics} and compare Figure 1 with the following picture:
\begin{figure}[H]
\centerline{
\qquad\qquad\qquad\qquad\includegraphics[scale=3.5]{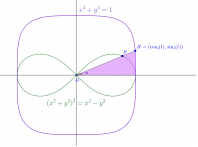}
}
\caption{Squircle and Lemniscate Trigonometry\qquad\qquad\qquad\qquad\qquad\qquad\qquad}
\end{figure}
Note that the identification $B'=(\cos_4(t),\sin_4(t))$ comes from the definition of $p$-norm trigonometric functions given in the
introduction.

If $C$ were moving at unit speed its distance from the origin at time $t$ would be $\cnl(t)$. However since it is moving at a constant
speed of $\sqrt{2}$, its distance from the origin at time $t$ is $\cnl(\sqrt{2}t)$.  Thus $\overline{OC}=\cnl(\sqrt{2}t)$.

Also we see that the point $B$ in Figure 1 is the radial projection of the point $B'=(\cos_4(t),\sin_4(t))$ in Figure 2 onto the lemniscate.
Hence
\[B=\left(\frac{\cos_4(t)\sqrt{\cos_4^2(t)-\sin_4^2(t)}}{\cos_4^2(t)+\sin_4^2(t)},
\frac{\sin_4(t)\sqrt{\cos_4^2(t)-\sin_4^2(t)}}{\cos_4^2(t)+\sin_4^2(t)}\right).\]
Substituting into the relation  $\overline{OC}=\overline{OB}^2$
and simplifying, we obtain:
\[
\cnl(\sqrt{2}t)=\frac{\cos_4^2(t)-\sin_4^2(t)}{\cos_4^2(t)+\sin_4^2(t)}.
\]
A priori we have only shown this holds in the range $0\le t\le\frac{\varpi}{2\sqrt{2}}$. However the physics interpretation of the motion
of $B'$ and $C$ given in Remark \ref{lemni_squircle_physics} shows that this extends to all real values $t$. This equation analytically
encodes the geometric interpretation given in Theorem \ref{main_result}. In particular we can derive the area formula for the
squircle of Corollary \ref{squircle_area} by comparing the periods of the two sides of this equation.
\end{proof}
\bigskip

We can obtain a different relation between the lemniscate and squircle trigonometric functions by deriving an alternative integration
formula for the area of a squircle. If we use the polar equation (\ref{polar_tan}), instead of (\ref{polar_cos}), to compute the area $a$ of the squircle sector depicted in Figure 2, we obtain
\begin{equation}\label{hyperbolic_area1}
t=2a=\int_0^\alpha r^2\,d\theta=\int_0^\alpha \frac{\sec^2(\theta)}{\sqrt{1+\tan^4(\theta)}}\,d\theta.
\end{equation}
If we substitute $v=\tan(\theta)$ in this integral, we get
\begin{equation}\label{hyperbolic_area2}
t=\int_0^{\tan(\alpha)}\frac{dv}{\sqrt{1+v^4}}.
\end{equation}
This relates the squircle area to the integral $\int_0^V\frac{dv}{\sqrt{1+v^4}}$ instead of $\int_R^1\frac{1}{\sqrt{1-r^4}}\,dr$.
Now the function $V\mapsto\int_0^V\frac{dv}{\sqrt{1+v^4}}$ is increasing on the real line and thus has an inverse
which is known as the \textit{hyperbolic lemniscate sine} $\slh(t)$ (c.f. \cite{Wikipedia:lemniscate_elliptic_functions}). Thus the equation
(\ref{hyperbolic_area2}) can be written as $t=\slh^{-1}(\tan(\alpha))$.  Hence we obtain
\begin{equation}\label{hyperbolic_eqn}
\tan_4(t)=\tan(\alpha)=\slh(t),
\end{equation}
where $\tan_4(t)=\frac{\sin_4(t)}{\cos_4(t)}$. However the hyperbolic lemniscate sine function can be expressed in terms of
the regular lemniscate trigonometric functions:
\[\slh(t)=\frac{\left(1+\cnl^2\left(\frac{t}{\sqrt{2}}\right)\right)\snl\left(\frac{t}{\sqrt{2}}\right)}{\sqrt{2}\cnl\left(\frac{t}{\sqrt{2}}\right)}\]
Combining this with equation (\ref{hyperbolic_eqn}) we obtain
\begin{equation}
\tan_4(t)=\frac{\left(1+\cnl^2\left(\frac{t}{\sqrt{2}}\right)\right)\snl\left(\frac{t}{\sqrt{2}}\right)}{\sqrt{2}\cnl\left(\frac{t}{\sqrt{2}}\right)},
\end{equation}
where $\snl$ is the lemniscate sine function, related to the lemniscate cosine function by the identity
\[\snl(u)=\cnl\left(\frac{\varpi}{2}-u\right),\]
by analogy to the trigonometric identity
\[\sin(u)=\cos\left(\frac{\pi}{2}-u\right).\]
For further reference we observe that this definition amounts to the statement that the lemniscate sine function is obtained by
inverting the integral $\int_0^R\frac{1}{\sqrt{1-r^4}}\,dr$, i.e. considering a particle moving at constant speed around the lemniscate
starting at the origin instead of $(1,0)$ as in the case of the lemniscate cosine function. Moreover the lemniscate analog of the Pythagorean identity $\cos^2(u)+\sin^2(u)=1$ is the identity
\begin{equation}\label{lemni_pythagorean}
\cnl^2(u)+\snl^2(u)+\cnl(u)\snl(u)=1.
\end{equation}

\section{An Alternate Geometric Relation and Proof}

In his lectures on complex function theory, Siegel \cite[p. 4, equation (9)]{siegel} derives the following relation between integrals:
\begin{equation}\label{Siegel_relation}\
\int_0^R\frac{dr}{\sqrt{1-r^4}}=\sqrt{2}\int_0^T\frac{dt}{\sqrt{1+t^4}},\quad R=\frac{\sqrt{2}\,T}{\sqrt{1+T^4}}.
\end{equation}
Siegel's calculation was a step in his explanation of Fagnano's theorem on doubling the arc of a lemniscate, which was the beginning of the
theory of elliptic integrals and elliptic functions, and was not connected in any way in his mind to the analysis of the squircle.
However in view of equation (\ref{hyperbolic_area2}), we see that this equation relates the
length of an arc of the lemniscate with the area of a sector of the squircle. This is a different relation than the one given in Theorem \ref{main_result}.
However these results are strongly related and in fact Siegel's calculation provides an alternative proof of Theorem \ref{main_result}.

We begin with a geometric interpretation of equation (\ref{Siegel_relation}) shown in the figure below.
\begin{figure}[H]
\centerline{
\includegraphics[scale=4]{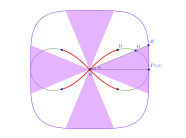}
}
\caption{Another Relation}
\end{figure}

The relation depicted here
is $L=A\sqrt{2}$, where $A$ is the shaded area in the picture and $L$ is the arc length of the bolded portion of the lemniscate.
This is exactly the same relation as depicted in Figure 1, except different parts of the lemniscate are bolded.
 The points $B$ and $D$ in the picture are related by the equation ${\overline{OD}^2=\frac{1-\overline{OB}^4}{1+\overline{OB}^4}}$, 
as opposed to the relation $\overline{OC}=\overline{OB}^2$ in Figure 1. Note that the area depicted in Figure 3 is exactly the same as in Figure 1.
Hence, given Theorem \ref{main_result}, the arc length $L$ of the bolded portion of the lemniscate in Figure 3 must equal the arc length 
of the different bolded portion in Figure 1. [We give an independent proof of this below  in Proposition \ref{arc_equality}.]
As in Figure 1, due to the symmetries of the squircle and the lemniscate, it suffices to check the relation for the portions of the areas and lengths 
of Figure 3 lying in the first octant: ${\{(x,y)\ |\ 0\le y\le x\}}$. If we denote the corresponding area by $a$ and the corresponding arc length by $l$, 
we have as before $A=8a$ and $L=4l$ and the given relation reduces to $l=2a\sqrt{2}$.

Hence we are reduced to proving the alternative version of Theorem \ref{main_result}.

\begin{theorem}\label{alternate_result} Let $B$ be a point in the first quadrant of the lemniscate $\left(x^2+y^2\right)^2=x^2-y^2$ and let $B'$ 
be its radial projection onto the squircle $x^4+y^4=1$. Let $D$ be the point on the first quadrant of the lemniscate such that 
${\overline{OD}^2=\frac{1-\overline{OB}^4}{1+\overline{OB}^4}}$. Then $l=2a\sqrt{2}$
where $l$ is the arc length of the lemniscate from $O$ to $D$ and $a$ denotes the area of the squircular sector $OPB'$.
\end{theorem}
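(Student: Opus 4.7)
The plan is to mirror the argument of Theorem \ref{main_result}: work in polar coordinates, write $l-2a\sqrt{2}$ as a difference of integrals parametrized by $\alpha$ (the polar angle of $B$), differentiate, and show the derivative vanishes identically, then pin the resulting constant down at a convenient value of $\alpha$.

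Letting $\gamma$ denote the polar angle of $D$, the lemniscate equation gives $\overline{OB}^2=\cos(2\alpha)$ and $\overline{OD}^2=\cos(2\gamma)$, so the hypothesis on $D$ translates into
\[
\cos(2\gamma)=\frac{1-\cos^2(2\alpha)}{1+\cos^2(2\alpha)}.
\]
The origin on the first-quadrant lemniscate corresponds to $\theta=\pi/4$, so reusing the integrands derived in the previous proof,
\[
l=\int_\gamma^{\pi/4}\frac{d\theta}{\sqrt{\cos(2\theta)}},\qquad a=\frac{1}{\sqrt{2}}\int_0^\alpha\frac{d\theta}{\sqrt{1+\cos^2(2\theta)}}.
\]
Differentiation via the FTC and the chain rule gives
\[
\frac{d}{d\alpha}\bigl(l-2a\sqrt{2}\bigr)=-\frac{1}{\sqrt{\cos(2\gamma)}}\frac{d\gamma}{d\alpha}-\frac{2}{\sqrt{1+\cos^2(2\alpha)}},
\]
and implicit differentiation of the relation between $\cos(2\gamma)$ and $\cos(2\alpha)$ supplies $d\gamma/d\alpha$.

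The key algebraic simplifications I expect to need are $\sin(2\gamma)=2\cos(2\alpha)/(1+\cos^2(2\alpha))$ and $\sqrt{\cos(2\gamma)}=\sin(2\alpha)/\sqrt{1+\cos^2(2\alpha)}$, both obtained directly from the defining identity for $\cos(2\gamma)$ via $\sin^2(2\gamma)=1-\cos^2(2\gamma)$. Once substituted, the two terms above should cancel exactly, delivering $\frac{d}{d\alpha}(l-2a\sqrt{2})=0$. Evaluating at $\alpha=0$ yields $\overline{OB}=1$, hence $\overline{OD}=0$, $D=O$ and $\gamma=\pi/4$; both integrals then collapse to integrals over empty intervals, fixing the constant at $0$. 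The main obstacle I anticipate is bookkeeping rather than conceptual: the $\gamma$-integral runs backwards (from $\pi/4$ down to $\gamma$), so $d\gamma/d\alpha$ comes out negative and the sign pattern that produces the cancellation is different from that in Theorem \ref{main_result}. A fully independent alternative would be to invoke Siegel's relation (\ref{Siegel_relation}) directly together with equation (\ref{hyperbolic_area2}) and the radial parametrization (\ref{radial_parametrization}), reducing the theorem to the short algebraic verification that $R=\sqrt{2}\,T/\sqrt{1+T^4}$ coincides with $\overline{OD}^2=(1-\overline{OB}^4)/(1+\overline{OB}^4)$ under the substitutions $R=\overline{OD}$, $T=\tan(\alpha)$.
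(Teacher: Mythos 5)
Your proposal is correct, but it takes a genuinely different route from the paper's proof. You transplant the differentiation argument of Theorem \ref{main_result}: with $\gamma$ the polar angle of $D$, the constraint becomes $\cos(2\gamma)=\frac{1-\cos^2(2\alpha)}{1+\cos^2(2\alpha)}=\frac{\sin^2(2\alpha)}{1+\cos^2(2\alpha)}$, implicit differentiation gives $\frac{d\gamma}{d\alpha}=-\frac{2\sin(2\alpha)}{1+\cos^2(2\alpha)}$, and combined with $\sqrt{\cos(2\gamma)}=\sin(2\alpha)/\sqrt{1+\cos^2(2\alpha)}$ the two terms of $\frac{d}{d\alpha}\bigl(l-2a\sqrt{2}\bigr)$ cancel exactly as you predict (I have checked the algebra; the sign flip from the backwards-running $\gamma$-integral works out). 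The evaluation at $\alpha=0$, where $\gamma=\pi/4$ and both integrals vanish, is also right. The paper instead proves the theorem by quoting Siegel's substitution $r=\sqrt{2}\,v/\sqrt{1+v^4}$, which converts the squircle-area integral $\sqrt{2}\int_0^T dv/\sqrt{1+v^4}$ (with $T=\tan(\alpha)$, via equation (\ref{hyperbolic_area2})) into the lemniscate arc-length integral $\int_0^{R'}dr/\sqrt{1-r^4}$, and then verifies algebraically, using the radial parametrization (\ref{radial_parametrization}), that $R'=\overline{OD}$ --- essentially the ``fully independent alternative'' you sketch in your last sentence. Your version is more self-contained and elementary, running in parallel with the proof of Theorem \ref{main_result}; the paper's version is shorter, exhibits the explicit change of variables underlying the identity, and connects the result to Siegel's treatment of Fagnano's theorem. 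Either version serves the paper's later purpose of deriving Theorem \ref{main_result} from Theorem \ref{alternate_result} without circularity, since neither uses the conclusion of Theorem \ref{main_result}. One small point to add in a polished write-up: the arc-length integral $\int_\gamma^{\pi/4}d\theta/\sqrt{\cos(2\theta)}$ is improper at $\theta=\pi/4$, so when you fix the constant at $\alpha=0$ (i.e.\ $\gamma=\pi/4$) you should note that this improper integral converges and that $l\to 0$ as $\gamma\to\pi/4$.
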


\begin{proof}
Siegel \cite[p. 3-4]{siegel} shows that substitution
\begin{equation}\label{Siegel_substitution}
r=\frac{\sqrt{2}\,v}{\sqrt{1+v^4}}
\end{equation}
proves the equality of integrals
\begin{equation}\label{Siegel_relation2}
\int_0^{R'}\frac{dr}{\sqrt{1-r^4}}=\sqrt{2}\int_0^T\frac{dv}{\sqrt{1+v^4}},\quad R'=\frac{\sqrt{2}\,T}{\sqrt{1+T^4}},\ 0\le T\le 1.
\end{equation}
[Note that we have renamed the variable of integration on the right side of equation (\ref{Siegel_relation2}) to avoid notational clashes in what follows below.]
In view of equation (\ref{hyperbolic_area2}), if we take $T=\tan(\alpha)$, the right hand side of equation (\ref{Siegel_relation2}) is precisely $2a\sqrt{2}$.
Hence it remains to show that the left hand side of equation (\ref{Siegel_relation2}) is the length of the lemniscate arc from $O$ to $D$, which
amounts to showing that $R'=\overline{OD}$.

If we denote $R=\overline{OB}$, then by equation (\ref{radial_parametrization}), the point $B$ has Cartesian coordinates
\[(x,y)=\left(R\sqrt{\frac{1+R^2}{2}},R\sqrt{\frac{1-R^2}{2}}\right).\]
Hence
\[T=\tan(\alpha)=\frac{y}{x}=\sqrt{\frac{1-R^2}{1+R^2}}.\]
It follows that
\begin{eqnarray*}
(R')^2 &= &\frac{2T^2}{1+T^4}\\
&= &\frac{2\left(\frac{1-R^2}{1+R^2}\right)}{1+\left(\frac{1-R^2}{1+R^2}\right)^2}\\
&= &\frac{1-R^4}{1+R^4}\\
&= &\frac{1-\overline{OB}^4}{1+\overline{OB}^4}\\
&= &\overline{OD}^2
\end{eqnarray*}
Thus $R'=\overline{OD}$ and we are done.
\end{proof}

We conclude by showing that Theorem \ref{alternate_result} implies Theorem \ref{main_result}. First observe that the proof of Theorem \ref{alternate_result} above
is mostly independent of the previous proof of Theorem \ref{main_result}. The only overlap is the derivation of the polar equation (\ref{polar_tan}) of the squircle,
which was then used in equations (\ref{hyperbolic_area1}) and (\ref{hyperbolic_area2}) to compute the area $a$ of the squircle sector. Thus we are not
engaging in circular reasoning when we derive Theorem \ref{main_result} from Theorem \ref{alternate_result} and the following Proposition.

\begin{proposition}\label{arc_equality}
The length of the lemniscate arc from $O$ to $D$ in Figure 3  is equal to the length of the arc from $C$ to $P$ in Figure 1.
\end{proposition}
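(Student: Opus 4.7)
The plan is to convert both lemniscate arc lengths to integrals in the radial polar coordinate $r$, observe that the two intervals of integration are exchanged by a single algebraic involution on $[0,1]$, and apply that involution as a substitution. On the first-quadrant lemniscate we have $r=\sqrt{\cos(2\theta)}$, so substituting $r^{2}=\cos(2\theta)$ in the arc-length integrand $\frac{d\theta}{\sqrt{\cos(2\theta)}}$ (just as in the proof of Theorem~\ref{main_result}) converts it, up to sign, to $\frac{dr}{\sqrt{1-r^{4}}}$. Since the arc from $C$ to $P=(1,0)$ corresponds to $r$ running from $\overline{OC}$ up to $1$, while the arc from $O$ to $D$ corresponds to $r$ running from $0$ up to $\overline{OD}$, the proposition reduces to the identity
\[
\int_{\overline{OC}}^{1}\frac{dr}{\sqrt{1-r^{4}}} \;=\; \int_{0}^{\overline{OD}}\frac{dr}{\sqrt{1-r^{4}}}.
\]

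Next I would eliminate $\overline{OB}$ from the two geometric hypotheses. Writing $R=\overline{OB}$, we have $\overline{OC}=R^{2}$ and $\overline{OD}^{2}=\frac{1-R^{4}}{1+R^{4}}$, and eliminating $R$ gives
\[
\overline{OD}^{2}=\frac{1-\overline{OC}^{2}}{1+\overline{OC}^{2}}.
\]
Equivalently, the map $\sigma\colon s\mapsto\sqrt{\tfrac{1-s^{2}}{1+s^{2}}}$ is an involution of $[0,1]$ that swaps $0$ with $1$ and $\overline{OC}$ with $\overline{OD}$.

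The last step is to apply the substitution $r=\sigma(u)$ to the integral on the left. A short differentiation gives $1-r^{4}=\frac{4u^{2}}{(1+u^{2})^{2}}$ and $dr=-\frac{2u\,du}{(1-u^{2})^{1/2}(1+u^{2})^{3/2}}$, so these combine into the very clean identity $\frac{dr}{\sqrt{1-r^{4}}}=-\frac{du}{\sqrt{1-u^{4}}}$. Because $\sigma$ sends $r=1$ to $u=0$ and $r=\overline{OC}$ to $u=\overline{OD}$, reversing the limits to absorb the minus sign turns the left integral into the right one, which completes the proof. There is no real conceptual obstacle; the only thing requiring care is the bookkeeping of signs and orientations. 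Once one notices that $\sigma$ is essentially the same substitution Siegel uses in the proof of Theorem~\ref{alternate_result}, the computation is forced, and in this sense the proposition and Theorem~\ref{alternate_result} are two facets of a single change of variables in the lemniscatic integral.
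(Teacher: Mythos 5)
Your proof is correct, but it takes a genuinely different route from the paper's. The paper's argument is essentially a two-line appeal to the lemniscate elliptic functions: writing $l_1$ and $l_2$ for the two arc lengths, it notes $\snl(l_1)=\overline{OD}$ and $\cnl(l_2)=\overline{OC}=\overline{OB}^2$, invokes the lemniscate Pythagorean identity in the form $\snl^2(l)=\frac{1-\cnl^2(l)}{1+\cnl^2(l)}$ to conclude $\snl^2(l_2)=\frac{1-\overline{OB}^4}{1+\overline{OB}^4}=\overline{OD}^2=\snl^2(l_1)$, and finishes by the injectivity of $\snl$ on $\left[0,\frac{\varpi}{2}\right]$. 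You instead prove the underlying integral identity from scratch: after reducing both arc lengths to $\int dr/\sqrt{1-r^4}$ over the intervals $[\overline{OC},1]$ and $[0,\overline{OD}]$, you verify that the involution $\sigma(u)=\sqrt{(1-u^2)/(1+u^2)}$ satisfies $dr/\sqrt{1-r^4}=-du/\sqrt{1-u^4}$ and exchanges the two intervals. Your computations check out ($1-\sigma(u)^4=4u^2/(1+u^2)^2$, the derivative is as you state, and $\sigma$ does swap $1\leftrightarrow 0$ and $\overline{OC}\leftrightarrow\overline{OD}$), so this amounts to a self-contained calculus proof of exactly the special case of the Pythagorean identity that the paper imports from the literature; your version is therefore more elementary and better aligned with the paper's stated aim of avoiding the machinery of elliptic functions, at the cost of a page of substitution bookkeeping. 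One small caveat: your closing remark that $\sigma$ is ``essentially the same substitution Siegel uses'' is loose --- Siegel's substitution $r=\sqrt{2}\,v/\sqrt{1+v^4}$ converts the lemniscatic integrand into the hyperbolic one $dv/\sqrt{1+v^4}$, whereas your $\sigma$ is an automorphism of the lemniscatic integral itself and is closer in spirit to the substitution $v=\sqrt{(1-r)/(1+r)}$ mentioned in the paper's final remark; this mislabeling does not affect the validity of the proof.
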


\begin{proof}
Let $l_1$ denote the length of the arc from $O$ to $D$ and $l_2$ denote the length of the arc from $C$ to $P$. Then by the definition of the lemniscate sine and
cosine functions, we have $\snl(l_1)=\overline{OD}$ and $\cnl(l_2)=\overline{OC}=\overline{OB}^2$. By the lemniscate Pythagorean identity (\ref{lemni_pythagorean}),
\[\snl^2(l_2)=\frac{1-\cnl^2(l_2)}{1+\cnl^2(l_2)}=\frac{1-\overline{OC}^2}{1+\overline{OC}^2}=\frac{1-\overline{OB}^4}{1+\overline{OB}^4}=\overline{OD}^2=\snl^2(l_1).\]
Since $\snl$ is non-negative and injective on the interval $\left[0,\frac{\varpi}{2}\right]$, it follows that ${l_1=l_2}$.
\end{proof}

\bigskip

\begin{remark}
We can also prove Theorem \ref{main_result} directly by adapting the proof used for Theorem \ref{alternate_result}. Observe that Theorem \ref{main_result} may be
written as
\begin{equation}
\int_{R^2}^1\frac{dr}{\sqrt{1-r^4}}=\sqrt{2}\int_0^T\frac{dv}{\sqrt{1+v^4}},\quad T=\tan(\alpha)=\sqrt{\frac{1-R^2}{1+R^2}},
\end{equation}
and these two integrals are related by the substitution $v=\sqrt{\frac{1-r}{1+r}}$. We leave the details of this proof as an exercise for the reader.
\end{remark}

\bigskip

\end{document}